\documentclass[11pt,runningheads]{llncs}
\usepackage{amsmath}
\usepackage{verbatim}
\usepackage{graphicx}
\usepackage[utf8]{inputenc}
\usepackage{tikz}
\usepackage{multicol}
\usepackage{bm}
\usepackage[utf8]{inputenc}
\usepackage{amsfonts}
\usepackage{xcolor}


%

\definecolor{blue}{HTML}{1F77B4}
\definecolor{orange}{HTML}{FF7F0E}
\definecolor{green}{HTML}{2CA02C}
\newtheorem{assumption}{Assumption}

\newcommand{\x}{\textbf{x}}
\newcommand{\y}{\textbf{y}}

\setlength{\floatsep}{3pt plus 1pt minus 1pt}
\setlength{\textfloatsep}{3pt plus 1pt minus 1pt}
\setlength{\intextsep}{3pt plus 1pt minus 1pt}
\setlength{\abovecaptionskip}{2pt plus 1pt minus 1pt}

\oddsidemargin .1cm \evensidemargin .1cm
\textheight 20cm  \textwidth 16cm

\begin{document}
\title{Convergence of Physics-informed Neural Networks for  Fully Nonlinear PDE's }
%
%

\author{Avetik Arakelyan\inst{1,2} \and
Rafayel Barkhudaryan\inst{1,2}}

%
%
\institute{Institute of Mathematics, NAS of Armenia, Yerevan, Armenia\\
\email{avetik.arakelyan@ysu.am}\\ \and
Yerevan State University, Yerevan, Armenia\\
\email{barkhudaryan@ysu.am}}
\maketitle              
\begin{abstract}
The present work is focused on exploring  convergence  of Physics-informed Neural Networks (PINNs) when applied to a specific class of second-order fully nonlinear Partial Differential Equations (PDEs). It is well-known that as the number of data grows, PINNs generate a sequence of minimizers which correspond to a sequence of neural networks. We  show that such sequence converges to a unique viscosity solution of a certain class of  second-order fully nonlinear PDE's, provided the latter satisfies the comparison principle in the viscosity sense. 
\end{abstract}

\textbf{AMS subject classifications}:  	65M12, 68T07, 41A46, 35J25, 35K20
\keywords{Physics Informed Neural Networks, Convergence, Viscosity Solutions, Differential Equations}
\section{Introduction}

PDEs play a crucial role in many fields of engineering and fundamental science ranging from fluid dynamics to acoustic and structural engineering. Finite elements modeling (FEM) methods are the standard solvers employed ubiquitously in the industry \cite{strang2008analysis,reddy1993introduction}. Despite their popularity, FEM methods display some limitations such as their computational cost for large industrial problems (mainly due to the required mesh size) and issues in leveraging external data sources, such as sensors data, to drive the solution of the PDEs. 

The PINNs approach discussed below is regarded as a promising alternative to FEM methods for covering some of these limitations. This approach is quite different from the standard supervised ML. In fact, instead of relying purely on data, it uses the physical properties of the PDE itself to guide the training process. Known data points can be easily added on top of the physics-based loss function to speed-up training speed.
Recently, interesting results have been obtained from comparing the FEM and PINN methods \cite{grossmann2024can}.

Machine learning techniques using deep neural networks have been successfully applied in various fields \cite{Lecun_Nature15_DeepLearning}
such as computer vision and natural language processing.
A notable advantage of using neural networks 
is its efficient implementation
using a dedicated hardware
(see \cite{Lagaris_98_ANN-ODE-PDE,Darbon_19_NNsHJ}).
Such techniques have also been applied in 
solving partial differential equations
 \cite{Raissi_19_PINNs,Lagaris_98_ANN-ODE-PDE,Dissanayake_94_ANN-PDE,Lagaris_00_ANN-Irregular,Berg_18_Unified,Sirignano_JCP18_DGM}, 
and it has become a new sub-field under the name of Scientific Machine
Learning (SciML) \cite{Baker_19_DCworkshop,Lu_19_Deepxde}.
The term Physics-Informed Neural Networks (PINNs) was introduced in \cite{Raissi_19_PINNs} and it has become one of the most popular deep learning methods in SciML. 
PINNs employ a neural network as a solution surrogate
and seek to find the best neural network guided by data and physical laws expressed as PDEs. Following the original formulation in \cite{Raissi_19_PINNs}, we give a brief overview of physics-informed neural networks  in the context of solving  partial differential equations. Generally, we consider PDEs taking the form
\begin{align}
\label{eq: PDE}
     \mathbf{u}_t +  \mathcal{N}[\mathbf{u}] = 0, \ \  t \in [0, T],  \ \mathbf{x} \in \Omega,
\end{align} 
subject to the initial and boundary conditions
\begin{align*}
     &\mathbf{u}( 0, \mathbf{x})=\mathbf{g}(\mathbf{x}), \ \ \mathbf{x} \in \Omega, \\
     &\mathcal{B}[\mathbf{u}] = 0,  \ \   t\in [0, T], \  \mathbf{x} \in  \partial \Omega,
\end{align*}
where $\mathcal{N}[\cdot]$ is a linear or nonlinear differential operator, and  $\mathcal{B}[\cdot] $ is a boundary operator  corresponding to Dirichlet, Neumann, Robin, or periodic boundary conditions. In addition, $\mathbf{u}$ describes the unknown latent solution that is governed by the  PDE system of Equation \eqref{eq: PDE}. 

We proceed by representing the unknown solution $\mathbf{u}(t, \mathbf{x})$ by a deep neural network $\mathbf{u}_{\mathbf{\theta}}(t, \mathbf{x})$, where $\mathbf{\theta}$ denotes all tunable parameters of the network (e.g., weights and biases). This allows us to define the PDE residuals as
\begin{align}
    \label{eq: pde_residual}
    \mathcal{R}_{\mathbf{\theta}}(t, \mathbf{x}) = \frac{\partial \mathbf{u}_{\mathbf{\theta}}}{\partial t}(t_r, \mathbf{x}_r) + \mathcal{N}[\mathbf{u}_{\mathbf{\theta}}](t_r, \mathbf{x}_r)
\end{align}

Then, a physics-informed model can be trained by minimizing the following composite loss function
\begin{align*}
    \label{eq: PINN_loss}
    \mathcal{L}(\mathbf{\theta}) =  \mathcal{L}_{ic}(\mathbf{\theta}) +  \mathcal{L}_{bc}(\mathbf{\theta}) +   \mathcal{L}_r(\mathbf{\theta}), 
\end{align*}
where 
\begin{align*}
     &\mathcal{L}_{ic}(\mathbf{\theta}) = \frac{1}{N_{ic}} \sum_{i=1}^{N_{ic}} \left| \mathbf{u}_{\mathbf{\theta}}(0, \mathbf{x}_{ic}^i) - \mathbf{g}(\mathbf{x}_{ic}^i) \right|^2, \\
     &\mathcal{L}_{bc}(\mathbf{\theta}) = \frac{1}{N_{bc}} \sum_{i=1}^{N_{bc}} \left| \mathcal{B}[\mathbf{u}_{\mathbf{\theta}}]( t_{bc}^i, \mathbf{x}_{bc}^i) \right|^2, \\
    &\mathcal{L}_r(\mathbf{\theta}) = \frac{1}{N_r} \sum_{i=1}^{N_r} \left| \mathcal{R}_{\mathbf{\theta}}(t^i_r, \mathbf{x}^i_r) \right|^2. 
\end{align*}
Here $\{\mathbf{x}_{ic}^i\}_{i=1}^{N_{ic}}$, $\{t_{bc}^i, \mathbf{x}_{bc}^i\}_{i=1}^{N_{bc}}$ and $\{t_{r}^i, \mathbf{x}_{r}^i\}_{i=1}^{N_{r}}$ can be the vertices of a fixed mesh or points that are randomly sampled at each iteration of a gradient descent algorithm. Notice that all required gradients with respect to input variables or network parameters $\mathbf{\theta}$ can be efficiently computed via automatic differentiation \cite{griewank2008evaluating}.

A series of works have shown the effectiveness of PINNs in one, two or three dimensional problems: fractional PDEs \cite{Pang_SISC19_fPINNs,Song_19_fPINNs,ren2023class}, stochastic differential equations \cite{Zhang_19_SPDE_PINNs,HanE_18_DLSPDEs}, 
biomedical problems \cite{Raissi_Nature20_HFM}, and fluid mechanics \cite{Mao_20_HighSpeedFlows,cai2021physics,eivazi2024physics}.

\section{Setting of the Problem and Notations}

For the setting of the problem and notations we  mainly follow the work \cite{shin2020convergence}.
Let $U$ be a bounded domain (open and connected) in $\mathbb{R}^d$, and for twice differentiable function $u:U\to\mathbb R$ let $Du$ and $D^2 u$ denote the gradient and Hessian matrix of $u$, respectively. It is noteworthy that $\overline{U},$ which is the closure of $U$ will be compact. Also let the function $F(x, r, p, X)$ be a continuous real-valued function defined on $U\times\mathbb{R}\times\mathbb{R}^d\times S^d$, with $S^d$ being the space of real symmetric $d\times d$ matrices. Write
$$
\mathcal F[u](x)\equiv F\left(x, u(x), Du(x), D^2u(x)\right).
$$
We consider partial differential equations (PDEs) of the form 

\begin{equation}\label{PDE}
\begin{cases}
\mathcal{F}[u](\bm{x}) =  0\quad \forall \bm{x} \in U,\\
 u(\bm{x}) = g(\bm{x}) \quad \forall \bm{x} \in \Gamma = \partial U.
\end{cases}
\end{equation}


The goal is to approximate the solution to the PDE \eqref{PDE}
from a set of training data. 
The training data consist of two types of data sets: residual and initial/boundary data.
A residual datum is a vector $\bm{x}_r$,
where $\bm{x}_r \in U$
and an initial/boundary datum is a pair of input and output $(\bm{x}_{b}, g(\bm{x}_{b}))$, where $\bm{x}_{b} \in \Gamma$.
The set of $m_r$ residual input data points and the set of $m_{b}$ initial/boundary input data points are denoted by
$\mathcal{T}_r^{m_r} = \{\bm{x}_r^i\}_{i=1}^{m_r}$ 
and $\mathcal{T}_{b}^{m_{b}} = \{\bm{x}_{b}^i\}_{i=1}^{m_{b}}$, respectively.
Let us denote the vector of the number of training samples by $\bm{m}=(m_r,m_{b})$.
Note that we slightly abuse notation as in \cite{Raissi_19_PINNs}: $\bm{x}_r$ refers to a point in $U$.
Similarly, $\bm{x}_{b}$ refers to a point in $\Gamma$.
$m_r$ and $m_{b}$ represent the number of training data points in $U$ and $\Gamma$, respectively. 
\begin{remark}
    The present paper only considers the high regularity setting for PDEs 
    where point-wise evaluations 
    are well-defined.
    We also assume $u$ is a scalar real-valued function.
\end{remark}

Given a class of neural networks $\mathcal{H}_{n}$ 
that may depend on the number of training samples $\bm{m}$ (also may implicitly depend on the data sample itself),
we seek to find a neural network $h^*$ in $\mathcal{H}_{n}$ that minimizes 
an objective (loss) function.
Here $n$ represents a complexity of the class, e.g., the number of parameters.
To define an appropriate objective function, 
let us consider a loss integrand, which is the standard choice in physics informed neural networks (PINNs) \cite{Raissi_19_PINNs}:
\begin{equation} \label{def:loss}
\begin{split}
\textbf{L}(\bm{x}_r,{\bm{x}}_b;h,\bm{\lambda},\bm{\lambda}^R) &= \left(\lambda_r\|\mathcal{F}[h](\bm{x}_r)\|^2 \right)\mathbb{I}_{U}(\bm{x}_r) + \lambda_r^R R_r(h) \\
&+ 
\lambda_{b}\|h(\bm{x}_{b}) - g(\bm{x}_{b})\|^2 \mathbb{I}_{\Gamma}(\bm{x}_{b})
+ \lambda_{b}^R R_{b}(h),
\end{split}
\end{equation}
where $\|\cdot\|$ is the Euclidean norm, 
$\mathbb{I}_{A}(\bm{x})$ is the indicator function on the set $A$,
$\bm{\lambda} = (\lambda_r, \lambda_{b})$, 
$\bm{\lambda}^R = (\lambda_r^R, \lambda_{b}^R)$,
and $R_r(\cdot), R_{b}(\cdot)$ are regularization functionals.
Here $\lambda_r, \lambda_r^R,\lambda_{b}, \lambda_{b}^R \in \mathbb{R}_{+}\cup\{0\}$.

Suppose $\mathcal{T}_r^{m_r}$ and $\mathcal{T}_{b}^{m_{b}}$ are independently and identically distributed (iid) samples from probability distributions $\mu_r$ and $\mu_{b}$, respectively.
Let us define the empirical probability distribution on $\mathcal{T}_r^{m_r}$ 
by
$\mu_r^{m_r} = \frac{1}{m_r}\sum_{i=1}^{m_r} \delta_{\bm{x}_r^i}$.
Similarly, $\mu_{b}^{m_{b}}$ is defined.
The empirical loss and the expected loss are obtained by taking the expectations 
on the loss integrand \eqref{def:loss} with respect to $\mu^{\bm{m}}=\mu_r^{m_r}\times \mu_{b}^{m_{b}}$ and 
$\mu = \mu_r \times \mu_{b}$, respectively:
\begin{equation} \label{def:empirical-expected-loss}
	\begin{split}
	\text{Loss}_{\bm{m}}(h;\bm{\lambda},\bm{\lambda}^R) = \mathbb{E}_{\mu^{\bm{m}}}[\textbf{L}(\bm{x}_r,\bm{x}_b;h,\bm{\lambda},\bm{\lambda}^R)], 
	\quad
	\text{Loss}(h;\bm{\lambda},\bm{\lambda}^R) = \mathbb{E}_{\mu}[\textbf{L}(\bm{x}_r,\bm{x}_b;h,\bm{\lambda},\bm{\lambda}^R)].
	\end{split}
\end{equation}
In order for the expected loss to be well-defined,
it is assumed that $\mathcal{F}[h]$ is in $L^2(U;\mu_r)$,
and $h$ and $g$ are in $L^2(\Gamma;\mu_{b})$
for all $h\in \mathcal{H}_{n}$.
If the expected loss function were available, 
its minimizer would be the solution to the PDE \eqref{PDE} or close to it.
However, since it is unavailable in practice,
the empirical loss function is employed.
This leads to the following minimization problem:
\begin{equation} \label{def:problem}
\min_{h \in \mathcal{H}_{n}} \text{Loss}_{\bm{m}}(h;\bm{\lambda},\bm{\lambda}^R).
\end{equation}
We then hope a minimizer of the empirical loss to be close to the solution to the PDE \eqref{PDE}.

We remark that in general, global minimizers to the problem of \eqref{def:problem} need not exist.
However, for $\epsilon > 0$, there always exists a 
$\epsilon$-suboptimal global solution $h^{\epsilon} \in \mathcal{H}_{n}$ \cite{Houska_19_Global}
satisfying $\text{Loss}_{\bm{m}}(h^{\epsilon}) \le \inf_{h \in \mathcal{H}_{n}} \text{Loss}_{\bm{m}}(h) + \epsilon$.
All the results of the present paper remain valid
if one replace global minimizers 
to $\epsilon$-suboptimal global minimizers for sufficiently small $\epsilon$.
Henceforth, for the sake of readability, we assume the existence of at least one global minimizer of the minimization problem \eqref{def:problem}.
Note that for $\bm{\lambda} \le \bm{\lambda'}$ 
and
$\bm{\lambda}^R \le \bm{\lambda'}^{R}$ 
(element-wise inequality), 
we have
$\text{Loss}_{\bm{m}}(h;\bm{\lambda},\bm{\lambda}^R) \le \text{Loss}_{\bm{m}}(h;\bm{\lambda}',\bm{\lambda'}^{R})$.

When $\bm{\lambda}^R = 0$, we refer to the (either empirical or expected) loss as the (empirical or expected) PINN loss \cite{Raissi_19_PINNs}. 
We denote the empirical and the expected PINN losses by $\text{Loss}^{\text{PINN}}_{\bm{m}}(h;\bm{\lambda})$
and 
$\text{Loss}^{\text{PINN}}(h;\bm{\lambda})$, respectively.

Thus, the following losses will be in our focus:
\begin{align}
\label{def:PINN-loss}
\text{Loss}_m^{\text{PINN}}(h;\boldsymbol{\lambda})& = \frac{\lambda_r}{m_r}\sum_{i=1}^{m_r}||\mathcal F[h](\boldsymbol{x_r^i})||^2 + \frac{\lambda_b}{m_b}\sum_{i=1}^{m_b}||h(\boldsymbol{x_b^i})-g(\boldsymbol{x_b^i})||^2,\\
\label{def:PINN-loss-2}
\text{Loss}^{\text{PINN}}(h;\bm{\lambda}) &= \lambda_r  \|\mathcal{F}[h]\|^2_{L^2(U;\mu)}
+\lambda_{b} \|h - g\|^2_{L^2(\Gamma;\mu_{b})}.
\end{align}
Here $m=(m_r,m_b).$

\subsection{Degenerate Elliptic Equations and Viscosity Solutions}
In this section we recall the notion of viscosity solutions defined for a degenerate elliptic PDEs. It was first introduced by Crandall and Lions (see \cite{MR690039} and \cite{MR732102}) for first order Hamilton-Jacobi equations. It turns out that this notion is an effective tool also in the study of second order (elliptic and parabolic) fully nonlinear problems. There is a vast literature devoted to viscosity solutions by now, and for a general theory the reader is referred to \cite{MR1118699}, \cite{MR1351007} and references therein. Below for the sake of convenience we invoke the definition of a viscosity solution and one of the main tools the so-called comparison principle which implies the uniquness of a viscosity solution.

Let $\Omega$ be an open subset of $\mathbb{R}^n$, and for twice differentiable function $u:\Omega\to\mathbb R$ let $Du$ and $D^2 u$ denote the gradient and Hessian matrix of $u$, respectively. Also let the function $F(x, r, p, X)$ be a continuous real-valued function defined on $\Omega\times\mathbb{R}\times\mathbb{R}^n\times S^n$, with $S^n$ being the space of real symmetric $n\times n$ matrices. Denote
$$
\mathcal F[u](x)\equiv F\left(x, u(x), Du(x), D^2u(x)\right).
$$
We consider the following second order fully nonlinear partial differential equation:
\begin{equation}\label{pde_visc}
\mathcal F[u](x) = 0, \qquad   x\in\Omega.
\end{equation}

\begin{definition}
The equation \eqref{pde_visc} is \textbf{degenerate elliptic} if
$$
 F(x, r, p,X)\leq F(x, s, p, Y )  \quad\mbox{whenever }\quad  r\leq s  \quad\mbox{and }\quad Y \leq X,
$$
where $Y \leq X$ means that $X - Y$ is a nonnegative definite symmetric matrix.
\end{definition}

\begin{definition}
$u:\Omega\to\mathbb R$ is called a \textbf{viscosity subsolution} of \eqref{pde_visc}, if it is upper semicontinuous and for each $\varphi\in C^2(\Omega)$ and local maximum point $x_0\in \Omega$ of $u-\varphi$ we have
\begin{equation}\label{2D-def-Visc-Subsol}
F\left(x_0, u(x_0), D\varphi(x_0), D^2 \varphi(x_0)\right)\le 0.
\end{equation}
\end{definition}

\begin{definition}
$u:\Omega\to\mathbb R$ is called a \textbf{viscosity supersolution} of \eqref{pde_visc}, if it is lower semicontinuous and for each $\varphi\in C^2(\Omega)$ and local minimum point $x_0\in \Omega$ of $u-\varphi$ we have
$$
F\left(x_0, u(x_0), D\varphi(x_0), D^2 \varphi(x_0)\right)\ge 0.
$$
\end{definition}
\begin{definition}
$u:\Omega\to\mathbb R$ is called a \textbf{viscosity solution} of \eqref{pde_visc}, if it is both a viscosity subsolution and supersolution (and hence continuous) for \eqref{pde_visc}.
\end{definition}
\begin{assumption}[Comparison Principle] 
We say the system \eqref{pde_visc} satisfies comparision principle if for every viscosity subsolution $u$ and viscosity supersolution $v$ of a system \eqref{pde_visc}, such that $u(x)\leq v(x)$ on $x\in \partial\Omega$ implies $u(x)\leq v(x)$ for all $x\in \Omega.$
\end{assumption}
In the rest of the work we will assume that a system \eqref{PDE} is degenerate elliptic and satisfies above comparison principle. Note that it will easily imply the uniqueness of a viscosity solution for a fixed boundary condition. Thus, the system \eqref{PDE} has a unique viscosity solution provided it is degenerate elliptic and satisfies comparison principle.

In fact we stressed on the comparison principle to be satisfied not only for the uniqueness of a viscosity solution, but also the existence of such solutions. The following well-known result due to Ishii is true \cite{MR1118699}:
\begin{theorem}[Existence via Perron's Method]
  Let comparison principle hold for \eqref{PDE}; i.e., given $u$ viscosity subsolution
and $v$ viscosity supersolution satisfying the same boundary condition, then $u \leq v.$
Suppose also that there exist $\underline{u}$ and $\overline{u}$ which are, respectively, a viscosity
subsolution and a viscosity supersolution, satisfying the same boundary
condition. Then
\[W(x)=\sup\{w(x)\;|\; \underline{u}\leq w\leq\overline{u},\mbox{and}\; w\; \mbox{viscosity subsolution}\}\]
is a viscosity solution of \eqref{PDE}.
\end{theorem}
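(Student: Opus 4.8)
The plan is to carry out the classical Perron construction in three stages: prove that the upper semicontinuous envelope $W^*$ of $W$ is a viscosity subsolution, prove that the lower semicontinuous envelope $W_*$ is a viscosity supersolution, and then use the comparison principle to collapse the inequality $W_*\le W\le W^*$ into an equality, whence $W$ is continuous and solves \eqref{PDE}. I would write $\mathcal S=\{w : \underline{u}\le w\le\overline{u},\ w\ \text{a viscosity subsolution}\}$ for the admissible family; it is nonempty since $\underline{u}\in\mathcal S$, and $W=\sup_{w\in\mathcal S}w$ satisfies $\underline{u}\le W\le\overline{u}$, so $W$ is locally bounded and its envelopes $W^*$, $W_*$ are finite.

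For the subsolution property I would invoke the standard stability fact that the u.s.c.\ envelope of a pointwise supremum of subsolutions is again a subsolution. Given $\varphi\in C^2$ and a (without loss of generality strict) local maximum $x_0$ of $W^*-\varphi$, I would choose $x_n\to x_0$ with $W(x_n)\to W^*(x_0)$ and subsolutions $w_n\in\mathcal S$ with $w_n(x_n)$ within $1/n$ of $W(x_n)$; maximizing $w_n-\varphi$ over a fixed small ball produces points $y_n\to x_0$ with $w_n(y_n)\to W^*(x_0)=\varphi(x_0)$. The subsolution inequality for $w_n$ at $y_n$ and the continuity of $F$ then pass to the limit to give $F(x_0,W^*(x_0),D\varphi(x_0),D^2\varphi(x_0))\le 0$, so $W^*$ is a subsolution.

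The heart of the argument, and the step I expect to be the main obstacle, is showing that $W_*$ is a supersolution, which I would prove by contradiction through a bump (perturbation) construction. Assume $W_*$ fails the supersolution inequality at some $x_0$: there is $\varphi\in C^2$ with $W_*-\varphi$ attaining a strict local minimum of value $0$ at $x_0$ while $F(x_0,W_*(x_0),D\varphi(x_0),D^2\varphi(x_0))<0$. One first checks $W_*(x_0)<\overline{u}(x_0)$, since equality would make $\overline{u}-\varphi$ have a local minimum at $x_0$ and contradict the supersolution property of $\overline{u}$. Then continuity of $F$ and degenerate ellipticity make the lifted test function $\psi_\delta=\varphi+\delta$ a classical subsolution on a small ball $B_r(x_0)$ satisfying $\psi_\delta\le\overline{u}$ there, while strictness of the minimum gives $\psi_\delta<W$ near $\partial B_r(x_0)$. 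Gluing, the function equal to $\max\{W,\psi_\delta\}$ on $B_r(x_0)$ and to $W$ outside is an admissible member of $\mathcal S$ that strictly exceeds $W$ along a sequence $x_n\to x_0$ realizing $W(x_n)\to W_*(x_0)$, contradicting the maximality of $W$. The delicate bookkeeping here, verifying that the glued function stays a subsolution across $\partial B_r(x_0)$ and remains trapped below $\overline{u}$, is precisely where degenerate ellipticity and the strict extremum are essential.

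Finally, because $\underline{u}$ and $\overline{u}$ share the boundary value $g$ on $\Gamma$ and $\underline{u}\le W\le\overline{u}$, passing to envelopes yields $W^*\le g\le W_*$ on $\Gamma$. The comparison principle, applied to the subsolution $W^*$ and the supersolution $W_*$, then gives $W^*\le W_*$ throughout $U$. Together with the trivial $W_*\le W\le W^*$ this forces $W_*=W=W^*$, so $W$ is continuous and is both a viscosity subsolution and a viscosity supersolution, i.e.\ a viscosity solution of \eqref{PDE}, completing the proof.
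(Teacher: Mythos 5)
The paper does not actually prove this statement: it is quoted as a known result due to Ishii, with a citation to the Crandall--Ishii--Lions ``User's Guide,'' so there is no in-paper argument to measure you against. Your outline is, in substance, the classical proof of that cited theorem (upper envelope of a sup of subsolutions is a subsolution; a bump/perturbation argument shows the lower envelope is a supersolution on pain of contradicting maximality; comparison collapses $W_*\le W\le W^*$ to equality), and it is essentially sound. Two points deserve more care than your sketch gives them. First, the claim that the glued function $\max\{W,\psi_\delta\}$ on $B_r(x_0)$ remains below $\overline{u}$ does not follow from $\psi_\delta\le\overline{u}$ pointwise being ``checked on the ball'' --- you only know $\varphi\le W_*\le\overline{u}$ with a strict gap at $x_0$, and $\varphi+\delta\le\overline{u}$ on all of $B_r(x_0)$ is not automatic for fixed $\delta$; in the standard treatment one either shrinks the ball using lower semicontinuity of $\overline{u}-\varphi$ together with $W_*(x_0)<\overline{u}(x_0)$, or one applies the comparison principle to the glued subsolution and $\overline{u}$ (the bump is interior, so the boundary inequality is free). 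Second, the boundary step ``passing to envelopes yields $W^*\le g\le W_*$ on $\Gamma$'' silently uses that $\underline{u}$ and $\overline{u}$ attain the boundary data continuously (an upper semicontinuous envelope taken from inside $U$ need not stay below an upper semicontinuous barrier at $\Gamma$ unless that barrier is continuous there); this is exactly the role of the hypothesis that the sub- and supersolution ``satisfy the same boundary condition,'' and it is worth saying so explicitly. With those two points filled in, your argument is the complete standard proof of the theorem the paper cites.
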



\subsection{Function Spaces and Regular Boundary}
In this section we adopt the notation from \cite{Gilbarg_15_EllipticPDEs,Friedman_08_ParabolicPDEs}.
Let $U$ be a bounded domain in $\mathbb{R}^d$. 
Let $\bm{x}=(x_1,\cdots,x_d)$ be a point in $\mathbb{R}^d$.
For a positive integer $k$, 
let $C^{k}(U)$ be the set of functions having all derivatives of order $\le k$ continuous in $U$. 
Also, let $C^{k}(\overline{U})$ be the set of functions in $C^{k}(U)$ whose derivatives of order $\le k$ have continuous extensions to $\overline{U}$ (the closure of $U$).

We call a function $u$ uniformly H\"{o}lder continuous with exponent $\alpha$ in $U$ if the quantity
\begin{equation} \label{def:holder-coefficient}
[u]_{\alpha;U}=\sup_{x,y \in U, x\ne y} \frac{|u(x) - u(y)|}{\|x-y\|^\alpha} < +\infty, \qquad 0 < \alpha \le 1,
\end{equation}
is finite.
Also, we call a function $u$ locally H\"{o}lder continuous with exponent $\alpha$ in $U$ if $u$ is uniformly H\"{o}lder continuous with exponent $\alpha$ on compact subsets of $U$. $[u]_{\alpha;U}$ is called the H\"{o}lder constant (coefficient) of $u$ on $U$.

Given a multi-index $\textbf{k}=(k_1,\cdots,k_d)$, we define
\begin{align*}
D^\textbf{k} u = \frac{\partial^{|\textbf{k}|}u}{\partial x_1^{k_1} \cdots \partial x_d^{k_d}},
\end{align*}
where $|\textbf{k}| = \sum_{i=1}^d k_i$.
Let $\textbf{k}=(k_1,\cdots,k_d)$ and $\textbf{k}'=(k'_1,\cdots,k'_d)$.
If $k_j' \le k_j$ for all $j$, we write $\textbf{k}' \le \textbf{k}$.
Let us define
\begin{align*}
[u]_{j,0;U} &:= \sup_{|\textbf{k}|=j} \sup_{U} |D^{\textbf{k}}u|, \qquad j=0,1, 2\cdots, \\
[u]_{j,\alpha;U} &:= \sup_{|\textbf{k}|=j} [D^{\textbf{k}}u]_{\alpha;U} =  \sup_{|\textbf{k}|=j} \left[\sup_{x,y \in U, x\ne y}  \frac{\|D^{\textbf{k}}u(x) - D^{\textbf{k}}u(y)\|}{\|x-y\|^\alpha}\right].
\end{align*}

\begin{definition}
	For a positive integer $k$
	and $0 < \alpha \le 1$, 
	the H\"{o}lder spaces $C^{k,\alpha}(\overline{U})$ ($C^{k,\alpha}(U)$) are 
	the subspaces of $C^{k}(\overline{U})$ ($C^{k}(U)$) consisting of 
	all functions $u \in C^{k}(\overline{U})$ ($C^{k}(U)$)
	satisfying $\sum_{j=0}^{k} [u]_{j,0;U} + [u]_{k,\alpha;U} < \infty$.
\end{definition}
For simplicity, we often write $C^{k,0} = C^{k}$ and $C^{0,\alpha} = C^\alpha$ for $0 < \alpha < 1$.
The related norms are defined on $C^{k}(\overline{U})$ and $C^{k,\alpha}(\overline{U})$, respectively, by 
\begin{align*}
\|u\|_{C^{k}(\overline{U})} =  \sum_{j=0}^{k} [u]_{j,0;U}, \qquad
\|u\|_{C^{k,\alpha}(\overline{U})} = \|u\|_{C^{k}(\overline{U})} + [u]_{k,\alpha;U}, \qquad 0 < \alpha \le 1.
\end{align*}
With these norms, $C^{k}(\overline{U})$ and $C^{k,\alpha}(\overline{U})$ are Banach spaces. 
Also, we denote $\frac{\partial u}{\partial x_j}$ as $D_ju$
and $\frac{\partial^2 u}{\partial x_i \partial x_j}$ as $D_{ij}u$.

\begin{definition}
	A bounded domain $U$ in $\mathbb{R}^d$ and its boundary are said to be of class $C^{k,\alpha}$ where $0\le \alpha \le 1$, if
	at each point $x_0 \in \partial U$
	there is a ball $B=B(x_0)$ and a one-to-one mapping
	$\psi$ of $B$ onto $D \subset \mathbb{R}^d$ such that 
	(i) $\psi(B\cap U) \subset \mathbb{R}_{+}^d$,
	(ii) $\psi(B\cap \partial U) \subset \partial \mathbb{R}_{+}^d$,
	(iii) 
	$\psi \in C^{k,\alpha}(B), \psi^{-1} \in C^{k,\alpha}(D)$.
\end{definition}

\subsection{Neural Networks}
Let $h^L: \mathbb{R}^{d} \to \mathbb{R}^{d_{\text{out}}}$ 
be a feed-forward neural network having $L$ layers and $n_\ell$ neurons in the $\ell$-th layer. 
The weights and biases in the $l$-th layer are represented by 
a weight matrix $\mathbf{W}^l \in \mathbb{R}^{n_l \times n_{l-1}}$ 
and a bias vector $\mathbf{b}^l \in \mathbb{R}^{n_l}$, respectively. 
Let $\bm{\theta}_L:=\{\bm{W}^j, \bm{b}^j\}_{1\le j \le L}$.
For notational completeness, let $n_0 = d$ and $n_L = d_{\text{out}}$.
For a fixed positive integer $L$, let $\vec{\bm{n}} = (n_0,n_1,\cdots,n_L) \in \mathbb{N}^{L+1}$ where $\mathbb{N}=\{1,2,3, \cdots \}$.
Then, $\vec{\bm{n}}$ describes a network architecture.
Given an activation function $\sigma(\cdot)$ which is applied element-wise,
the feed-forward neural network is defined by
\begin{align*}
	h^{\ell}(\textbf{x}) &= \bm{W}^{\ell}\sigma(h^{\ell-1}(\textbf{x})) + \bm{b}^{\ell}
	\in \mathbb{R}^{N_\ell}, 
	\qquad \text{for} \quad 2 \le \ell \le L
\end{align*}
and $h^1(\textbf{x}) = \bm{W}^{1}\textbf{x} + \bm{b}^{1}$.
The input is $\mathbf{x} \in \mathbb{R}^{n_0}$, 
and the output of the $\ell$-th layer is $h^\ell(\bm{x}) \in \mathbb{R}^{n_\ell}$. 
Popular choices of activation functions include 
the sigmoid ($1/(1+e^{-x})$), the hyperbolic tangent ($\tanh(x)$),
and the rectified linear unit ($\max\{x,0\}$).
Note that $h^L$ is called a $(L-1)$-hidden layer neural network or a $L$-layer neural network.

Since a network $h^L(\bm{x})$ depends on the network parameters $\bm{\theta}_L$
and the architecture $\vec{\bm{n}}$,
we often denote $h^L(\bm{x})$ by $h^L(\bm{x};\vec{\bm{n}}, \bm{\theta}_L)$. 
If $\vec{\bm{n}}$ is clear in the context, we simple write $h^L(\bm{x};\bm{\theta}_L)$.
Given a network architecture, we define a neural network function class
\begin{equation} \label{def:NN-class}
	\mathcal{H}_{\vec{\bm{n}}}^{\text{NN}} = \left\{h^L(\cdot;\vec{\bm{n}},\bm{\theta}_L):\mathbb{R}^{d}\mapsto \mathbb{R}^{d_\text{out}} | \bm{\theta}_L = \{(\bm{W}^j,\bm{b}^j) \}_{j=1}^L
	  \right\}.
\end{equation}

Since $h^L(\bm{x};\bm{\theta}_L)$ is parameterized by $\bm{\theta}_L$, 
the problem of \eqref{def:problem} with $\mathcal{H}_{\vec{\bm{n}}}^{\text{NN}}$ is
equivalent to
\begin{equation*} 
	\min_{\bm{\theta}_L}  \text{Loss}_{\bm{m}}(\bm{\theta}_L;\bm{\lambda},\bm{\lambda}^R), \quad
	\text{where} \quad
	\text{Loss}_{\bm{m}}(\bm{\theta}_L;\bm{\lambda},\bm{\lambda}^R) = \text{Loss}_{\bm{m}}(h( x;\bm{\theta}_L);\bm{\lambda},\bm{\lambda}^R).
\end{equation*}
Throughout this paper, the activation function is assumed to be sufficiently smooth,
which is common in practice.
Note that $\tanh(x)$ is bounded by 1 and is 1-Lipschitz continuous. 
The $k$-th derivative of $\tanh(x)$
is expressed as a polynomial of $\tanh(x)$ with a finite degree,
which shows both the boundedness and Lipschitz continuity.

\begin{remark}
    In what follows, 
    we simply write 
    $\mathcal{H}_{\vec{\bm{n}}}^{NN}$
    as
    $\mathcal{H}_{\bm{m}}$, 
    assuming $\vec{\bm{n}}$ depends on $\bm{m}$
    and possibly implicitly on the data samples itself.
    Since neural networks
    are universal approximators,
    the network architecture $\vec{\bm{n}}$ is expected to 
    grow 
    proportionally on $\bm{m}$.
\end{remark}

\section{Convergence of PINN}\label{sec:analysis}


The convergence analysis is based on the probabilistic space filling arguments \cite{calder2019consistency,Finlay_18_Lipschitz}.
In this regard, we recall the following assumptions given in \cite{shin2020convergence} on the training data distributions.
\begin{assumption} \label{assumption:data-dist}
	Let $U$ be a bounded domain in $\mathbb{R}^d$
	that is at least of class $C^{0,1}$ 
	and $\Gamma =\partial U$.
	Let $\mu_r$ and $\mu_{b}$ be probability distributions defined on $U$ and $\Gamma$, respectively.
	Let $\rho_r$ be the probability density of $\mu_r$ with respect to $d$-dimensional Lebesgue measure on $U$.
	Let $\rho_{b}$ be the probability density of $\mu_{b}$
	with respect to $(d-1)$-dimensional Hausdorff measure on $\Gamma$.
	\begin{itemize}
		\item[$\bullet$]  $\rho_r$ and $\rho_{b}$ are supported on $\overline{U}$
		and $\Gamma$, respectively.
		Also, $\inf_{U} \rho_r > 0$ and $\inf_{\Gamma} \rho_{b} > 0$.
		\item[$\bullet$] 
		For $\epsilon > 0$, there exists partitions of $U$ and $\Gamma$, $\{U_j^\epsilon\}_{j=1}^{K_{r}}$
		and $\{\Gamma_j^\epsilon\}_{j=1}^{K_{b}}$
		that depend on $\epsilon$
		such that 
		for each $j$, 
		there are cubes $H_{\epsilon}(\textbf{z}_{j,r})$ and 
		$H_{\epsilon}(\textbf{z}_{j,b})$ of side length $\epsilon$
		centered at $\textbf{z}_{j,r} \in U_j^\epsilon$
		and $\textbf{z}_{j,b} \in \Gamma_j^\epsilon$, respectively, 
		satisfying $U_j^\epsilon \subset H_{\epsilon}(\textbf{z}_{j,r})$
		and $\Gamma_j^\epsilon \subset H_{\epsilon}(\textbf{z}_{j,b})$.
	  \item[$\bullet$]  
		There exists positive constants $c_r, c_{b}$ such that 
		$\forall \epsilon > 0$,
		the partitions from the above satisfy
		$c_r \epsilon^{d} \le \mu_r(U_j^\epsilon)$
		and
		$c_{b} \epsilon^{d-1} \le 
		\mu_{b}(\Gamma_j^\epsilon)$
		for all $j$.

		There exists positive constants $C_r, C_{b}$ such that 
		$\forall \x_r \in U$ and $\forall \x_b \in \Gamma$,
		$\mu_r(B_{\epsilon}(\x_r) \cap U) \le C_r\epsilon^d$
		and 
		$\mu_{b}(B_\epsilon(\x_{b}) \cap \Gamma) \le C_{b} \epsilon^{d-1}$
		where 
		$B_\epsilon(\x)$ is a closed ball of radius $\epsilon$ centered at $\x$.

		Here $C_r, c_r$ depend only on $(U,\mu_r)$
		and $C_{b}, c_{b}$ depend only on $(\Gamma, \mu_{b})$.
		\item[$\bullet$]  When $d=1$, 
		we assume that all boundary points are available.
		Thus, no random sample is needed on the boundary.
	\end{itemize}
\end{assumption}

We remark that Assumption~\ref{assumption:data-dist} guarantees that random samples drawn from probability distributions can fill up both the interior of the domain $U$ and the boundary $\partial U$. 
These are mild assumptions and can be satisfied in many practical cases. 
For example, let $U=(0,1)^d$. Then the uniform probability distributions on both $U$ and $\partial U$ satisfy Assumption~\ref{assumption:data-dist}.


We now state similar result as in  \cite{shin2020convergence} that bounds the expected PINN loss in terms of a regularized empirical loss.  
Let us recall that $\bm{m}$ is the vector of the number of training data points, i.e.,
$\bm{m} = (m_r,m_{b})$.
The constants $c_r, C_r, c_{b}, C_{b}$ are introduced in Assumption~\ref{assumption:data-dist}.
For a function $u$, $[u]_{\alpha;U}$ is the H\"{o}lder constant of $u$ with exponent $\alpha$ in $U$ \eqref{def:holder-coefficient}.
\\\\
In the next coming theorems the following technical lemma plays a crucial role.

\begin{lemma}[Lemma B.2 in \cite{shin2020convergence}] \label{cor-sampling}
	Let $X$ be a compact subset in $\mathbb{R}^d$.
	Let $\mu$ be a probability measure supported on $X$.
	Let $\rho$ be the probability density of $\mu$ with respect to 
	$s$-dimensional Hausdorff measure on $X$
	such that $\inf_{X} \rho > 0$.
	Suppose that 
	for $\epsilon > 0$, 
	there exists a partition of $X$, $\{X_k^\epsilon\}_{k=1}^{K_{\epsilon}}$
	that depends on $\epsilon$ 
	such that for each $X_k^\epsilon$, 
	$c\epsilon^s\le \mu(X_k^\epsilon)$ where $c > 0$ depends only on $(\mu, X)$,
	and 
	there exists a cube $H_{\epsilon}(\textbf{z}_k)$ of side length $\epsilon$ centered at some $\textbf{z}_k$ in $X_k$
	such that $X_k \subset H_{\epsilon}(\textbf{z}_k)$.
	Then,
	with probability at least $1 - \sqrt{n}(1-1/\sqrt{n})^n$ over iid $n$ sample points $\{\x_i\}_{i=1}^n$ from $\mu$,
	for any $\x \in X$, there exists a point $\x_j$
	such that $\|\x - \x_j\| \le \sqrt{d}c^{-\frac{1}{s}}n^{-\frac{1}{2s}}$.
\end{lemma}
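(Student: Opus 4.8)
The plan is to run a standard space-filling (covering) argument: calibrate the scale $\epsilon$ to the sample size $n$, show via a union bound that with high probability every partition cell $X_k^\epsilon$ captures at least one of the sampled points, and then convert ``every cell is hit'' into the pointwise distance bound using the hypothesis that each cell sits inside a cube of side length $\epsilon$.

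First I would fix $\epsilon > 0$ (to be chosen later as a function of $n$) and consider the associated partition $\{X_k^\epsilon\}_{k=1}^{K_\epsilon}$. For a single cell, the probability that none of the $n$ iid samples lands in $X_k^\epsilon$ is $(1-\mu(X_k^\epsilon))^n$, which by the hypothesis $\mu(X_k^\epsilon)\ge c\epsilon^s$ is at most $(1-c\epsilon^s)^n$. To control the number of cells I would use that the $X_k^\epsilon$ partition $X$, so their measures sum to $1$; combined with $\mu(X_k^\epsilon)\ge c\epsilon^s$ this forces $K_\epsilon\le (c\epsilon^s)^{-1}$. A union bound over the $K_\epsilon$ cells then bounds the probability that some cell is empty by $K_\epsilon(1-c\epsilon^s)^n\le (c\epsilon^s)^{-1}(1-c\epsilon^s)^n$.

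The calibration step is what produces the exact probability appearing in the statement. I would choose $\epsilon$ so that $c\epsilon^s=n^{-1/2}$, i.e. $\epsilon=c^{-1/s}n^{-1/(2s)}$; substituting this into the previous bound gives a failure probability of at most $\sqrt{n}\,(1-1/\sqrt{n})^n$, so the ``good'' event that every cell contains at least one sample has probability at least $1-\sqrt{n}(1-1/\sqrt{n})^n$. On this event, given any $\x\in X$, it belongs to some cell $X_k^\epsilon$, which contains a sample point $\x_j$; since $X_k^\epsilon\subset H_\epsilon(\textbf{z}_k)$ and a cube of side length $\epsilon$ in $\mathbb{R}^d$ has diameter $\sqrt{d}\,\epsilon$, both $\x$ and $\x_j$ lie in this cube and hence $\|\x-\x_j\|\le \sqrt{d}\,\epsilon=\sqrt{d}\,c^{-1/s}n^{-1/(2s)}$, which is the claimed estimate.

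There is no deep obstacle here; the argument is elementary once the ingredients are lined up. The only delicate point is the calibration: the precise choice $c\epsilon^s=n^{-1/2}$ is exactly what makes the crude union-bound expression collapse to the clean form $\sqrt{n}(1-1/\sqrt{n})^n$, and one should check that this $\epsilon$ is admissible for the relevant range of $n$. I also note that the density hypothesis $\inf_X\rho>0$ does not enter the proof directly; it is the structural mechanism guaranteeing the measure lower bound $\mu(X_k^\epsilon)\ge c\epsilon^s$ in concrete instances, and since that bound is assumed outright in the statement, the proof uses it only in the form given.
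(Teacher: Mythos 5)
Your proof is correct; the paper itself states this lemma without proof, importing it verbatim as Lemma B.2 of the cited reference, and your union-bound-over-cells argument with the calibration $c\epsilon^s = n^{-1/2}$ is precisely the standard proof given there. The only point worth making explicit is that the hypothesis supplies an admissible partition for \emph{every} $\epsilon>0$, so the specific choice $\epsilon = c^{-1/s}n^{-1/(2s)}$ is always available, and $1-1/\sqrt{n}\ge 0$ for $n\ge 1$ so the probability bound is meaningful; your closing remark that $\inf_X\rho>0$ is not used directly is also accurate.
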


\begin{theorem} \label{thm:gen}
Suppose Assumption~\ref{assumption:data-dist} holds.
	Let $m_r$ and $m_{b}$ be the number of iid samples from $\mu_r$ and $\mu_{b}$, respectively.
	For some $0 < \alpha \le 1$, 
	let $h, g$, $R_r(h)$, $R_{b}(h)$  satisfy
	\begin{align*}
	    \big[\mathcal{F}[h]\big]^2_{\alpha;U} \le R_r(h)  < +\infty, \quad
	    \big[h\big]_{\alpha;\Gamma}^2 \le R_{b}(h) < +\infty, \quad
	  \big[g\big]_{\alpha;\Gamma}  < +\infty.
	\end{align*}
	Let $\bm{\lambda} = (\lambda_r,\lambda_{b})$ be a fixed vector.
    Let $\bm{\hat{\lambda}}_{\bm{m}}^R = (\hat{\lambda}_{r,\bm{m}}^R,\hat{\lambda}_{b,\bm{m}}^R)$
    be a vector whose elements to be defined.

	For $d \ge 2$, with probability at least, 
	$(1 - \sqrt{m_r}(1-1/\sqrt{m_r})^{m_r})(1 - \sqrt{m_{b}}(1-1/\sqrt{m_{b}})^{m_{b}})$,
	we have
	\begin{equation*}
	    \text{Loss}^{\text{PINN}}(h;\bm{\lambda})
        \le C_{\bm{m}}\cdot \text{Loss}_{\bm{m}}(h;\bm{\lambda}, \bm{\hat{\lambda}}_{\bm{m}}^R)
        +C'm_{b}^{-\frac{\alpha}{d-1}},
	\end{equation*}
	where $\kappa_r = \frac{C_r}{c_r}$, $\kappa_{b} = \frac{C_{b}}{c_{b}}$, 
	$C_{\bm{m}} = 3\max\{\kappa_r \sqrt{d}^{d} m_r^{\frac{1}{2}}, \kappa_{b} \sqrt{d}^{d-1} m_{b}^{\frac{1}{2}}\}$, 
	$C'$ is a universal constant that depends only on $\bm{\lambda}$, $d$, $c_{b}$, $\alpha$, $g$,
	and
	\begin{equation} \label{def:lambdas}
	\begin{split}
	    \hat{\lambda}_{r,\bm{m}}^R = \frac{3\lambda_r \sqrt{d}^{2\alpha}c_r^{-\frac{2\alpha}{d}}}{C_{\bm{m}}}\cdot m_r^{-\frac{\alpha}{d}},
        \quad
        \hat{\lambda}_{b,\bm{m}}^R = \frac{3\lambda_{b} \sqrt{d}^{2\alpha} c_{b}^{-\frac{2\alpha}{d-1}}}{C_{\bm{m}}}\cdot m_{b}^{-\frac{\alpha}{d-1}}.
	\end{split}
	\end{equation}

	For $d = 1$,
	with probability at least, $1 - \sqrt{m_r}(1-1/\sqrt{m_r})^{m_r}$, 
	we have
	\begin{equation*}
	\text{Loss}^\text{PINN}(h;\bm{\lambda})
	\le C_{\bm{m}}\cdot \text{Loss}_{\bm{m}}(h;\bm{\lambda},\bm{\hat{\lambda}}^R_{\bm{m}}),
	\end{equation*}
	where 
	$C_{\bm{m}} = 3\kappa_r m_r^{\frac{1}{2}}$, 
	$\hat{\lambda}^R_{r,\bm{m}} = \frac{\lambda_r c_r^{-2\alpha}}{\kappa_r}\cdot m_r^{-\alpha-\frac{1}{2}}$,
	$\hat{\lambda}_{b,\bm{m}}^R = 0.$


\end{theorem}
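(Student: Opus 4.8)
The plan is to treat the two pieces of the expected PINN loss \eqref{def:PINN-loss-2} separately, reducing each $L^2$-norm to its empirical average plus a H\"older correction that is then absorbed into the regularizers. Throughout I would work on the high-probability event furnished by Lemma~\ref{cor-sampling}, applied with $s=d$, $c=c_r$, $n=m_r$ on the interior $U$, and with $s=d-1$, $c=c_{b}$, $n=m_{b}$ on the boundary $\Gamma$; since the interior and boundary samples are independent, the two space-filling events are independent and their probabilities multiply, which is exactly the product appearing in the statement. On these events every $\bm{x}\in U$ (resp.\ $\bm{x}\in\Gamma$) admits a sample point within distance $\delta_r=\sqrt{d}\,c_r^{-1/d}m_r^{-1/(2d)}$ (resp.\ $\delta_{b}=\sqrt{d}\,c_{b}^{-1/(d-1)}m_{b}^{-1/(2(d-1))}$).

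For the residual term I would write $f=\mathcal{F}[h]$ and, for each $\bm{x}\in U$, let $\bm{x}_{j(\bm{x})}$ be a nearest residual sample. Using an elementary inequality together with H\"older continuity,
\begin{equation*}
|f(\bm{x})|^2\le 2|f(\bm{x}_{j(\bm{x})})|^2+2[f]_{\alpha;U}^2\,\|\bm{x}-\bm{x}_{j(\bm{x})}\|^{2\alpha}\le 2|f(\bm{x}_{j(\bm{x})})|^2+2[f]_{\alpha;U}^2(\sqrt{d})^{2\alpha}c_r^{-2\alpha/d}m_r^{-\alpha/d}.
\end{equation*}
Integrating against the probability measure $\mu_r$, the H\"older term is controlled by $R_r(h)\ge[f]_{\alpha;U}^2$ and, after multiplying by $\lambda_r$, matches the contribution $C_{\bm{m}}\hat{\lambda}_{r,\bm{m}}^R R_r(h)$ prescribed in \eqref{def:lambdas}. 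For the first term I would decompose $U$ into the Voronoi cells $V_i$ of the samples; by the space-filling property $V_i\subset B_{\delta_r}(\bm{x}_r^i)$, so Assumption~\ref{assumption:data-dist} gives $\mu_r(V_i)\le C_r\delta_r^{d}=\kappa_r(\sqrt{d})^{d}m_r^{-1/2}\le \tfrac13 C_{\bm{m}}/m_r$. Hence $\int_U|f(\bm{x}_{j(\bm{x})})|^2\,d\mu_r=\sum_i|f(\bm{x}_r^i)|^2\mu_r(V_i)\le \tfrac{C_{\bm{m}}}{3m_r}\sum_i|f(\bm{x}_r^i)|^2$, which is dominated by the residual part of $C_{\bm{m}}\cdot\text{Loss}_{\bm{m}}$.

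The boundary term would be handled identically with $\phi:=h-g$ in place of $f$ and $\delta_{b}$ in place of $\delta_r$; the one new feature is that $[\phi]_{\alpha;\Gamma}^2\le 2[h]_{\alpha;\Gamma}^2+2[g]_{\alpha;\Gamma}^2\le 2R_{b}(h)+2[g]_{\alpha;\Gamma}^2$ splits into a regularizable part (absorbed by $\hat{\lambda}_{b,\bm{m}}^R$) and the fixed quantity $[g]_{\alpha;\Gamma}^2$, whose $\delta_{b}^{2\alpha}$-weighted contribution produces precisely the additive term $C'm_{b}^{-\alpha/(d-1)}$ with $C'$ depending only on $\bm{\lambda},d,c_{b},\alpha,g$. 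Summing the interior and boundary estimates and recalling $C_{\bm{m}}=3\max\{\kappa_r(\sqrt{d})^{d}m_r^{1/2},\kappa_{b}(\sqrt{d})^{d-1}m_{b}^{1/2}\}$ yields the claimed inequality. The case $d=1$ is simpler: by Assumption~\ref{assumption:data-dist} all boundary points are available, so the boundary norm is captured exactly, $\hat{\lambda}_{b,\bm{m}}^R=0$, no $C'$-term appears, and only the interior space-filling event is needed, giving the stated probability $1-\sqrt{m_r}(1-1/\sqrt{m_r})^{m_r}$; substituting $d=1$ into the constants recovers $C_{\bm{m}}=3\kappa_r m_r^{1/2}$ and $\hat{\lambda}_{r,\bm{m}}^R=\lambda_r c_r^{-2\alpha}\kappa_r^{-1}m_r^{-\alpha-1/2}$.

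I expect the main obstacle to be the uniform control of the empirical integral $\int_U|f(\bm{x}_{j(\bm{x})})|^2\,d\mu_r$: one must convert a pointwise nearest-neighbour bound into a per-cell statement, which hinges on simultaneously using the space-filling radius from Lemma~\ref{cor-sampling} to guarantee $V_i\subset B_{\delta_r}(\bm{x}_r^i)$ and the upper measure bound $\mu_r(B_\epsilon(\bm{x}_r)\cap U)\le C_r\epsilon^d$ from Assumption~\ref{assumption:data-dist}. The remaining difficulty is purely bookkeeping: arranging the factor $3$ in $C_{\bm{m}}$ and in $\bm{\hat{\lambda}}_{\bm{m}}^R$ so that the single constant $C_{\bm{m}}$ simultaneously absorbs the factor $\tfrac23$ coming from the elementary inequality, the regularizer weights, and the fixed $g$-contribution on the boundary.
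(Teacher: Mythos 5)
Your proposal is correct and follows essentially the same route as the paper: the paper's proof simply invokes Lemma~B.1 of \cite{shin2020convergence} after applying Lemma~\ref{cor-sampling}, and what you write out is precisely the content of that cited lemma (nearest-sample decomposition, H\"{o}lder correction absorbed into the regularizers, Voronoi cells controlled by the upper measure bound $\mu_r(B_\epsilon\cap U)\le C_r\epsilon^d$). One small bookkeeping point: on the boundary your nested two-term inequalities give $4[h]_{\alpha;\Gamma}^2\delta_b^{2\alpha}$ rather than the $3[h]_{\alpha;\Gamma}^2\delta_b^{2\alpha}$ needed to match \eqref{def:lambdas}; writing $\phi(\x)=\phi(\x')+(h(\x)-h(\x'))-(g(\x)-g(\x'))$ and using $(a+b+c)^2\le 3(a^2+b^2+c^2)$ once yields the stated constants exactly, including $C'=3\lambda_b\sqrt{d}^{2\alpha}c_b^{-2\alpha/(d-1)}[g]_{\alpha;\Gamma}^2$.
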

\begin{proof}
	The proof  largely follows the same lines as in \cite[Theorem $3.1$]{shin2020convergence}.
	Let $\mathcal{T}_r^{m_r}=\{\x_r^i\}_{i=1}^{m_r}$ be iid samples from $\mu_r$ on $U$
	and $\mathcal{T}_b^{m_b} = \{\x^i_b\}_{i=1}^{m_b}$ be 
	iid samples from $\mu_b$ on $\Gamma=\partial U$.
	
	By Lemma~\ref{cor-sampling},
	with probability at least   
	\begin{equation} \label{app:thm-prob}
	(1 - \sqrt{m_r}(1-1/\sqrt{m_r})^{m_r})
	(1 - \sqrt{m_{b}}(1-1/\sqrt{m_{b}})^{m_{b}}),
	\end{equation}
	$\forall \x_r \in U$ and $\forall \x_b \in \Gamma$,
	there exists $\x_r' \in \mathcal{T}_r^{m_r}$
	and $\x_b' \in \mathcal{T}_b^{m_b}$ such that 
	$\|\x_r - \x_r'\| \le \sqrt{d}c_r^{-\frac{1}{d}}m_r^{-\frac{1}{2d}}$ and $\|\x_{b} - \x_{b}'\| \le \sqrt{d}c^{-\frac{1}{d-1}}_{b}m_{b}^{-\frac{1}{2(d-1)}}$.
	Let  $\epsilon_r = \sqrt{d}c_r^{-\frac{1}{d}}m_r^{-\frac{1}{2d}}$
	and $\epsilon_{b} = \sqrt{d}c^{-\frac{1}{d-1}}_{b}m_{b}^{-\frac{1}{2(d-1)}}$.
	If we apply  Lemma B.1 in \cite{shin2020convergence} to our fully nonlinear case, then  with probability at least \eqref{app:thm-prob}, one gets
 
    \begin{align*}
    \text{Loss}^{\text{PINN}}(h;\bm{\lambda})&\le C_{\bm{m}} \cdot 
    \left[
    \text{Loss}_{\bm{m}}^{\text{PINN}}(h;\bm{\lambda})+\lambda_{r,\bm{m}}^R\cdot \big[\mathcal{F}[h]\big]_{\alpha;U}^2 + \lambda_{b,\bm{m}}^R \cdot \big[h\big]_{\alpha;\Gamma}^2
    \right] + \\&+
    3\lambda_{b}\sqrt{d}^{2\alpha} c_{b}^{-\frac{2\alpha}{d-1}}m_b^{-\frac{\alpha}{d-1}} \cdot\big[g\big]_{\alpha;\Gamma}^2,
	\end{align*}
    where
    $$
       C_{\bm{m}} = 3\max\left\{\frac{C_r}{c_r}\sqrt{d}^{d} m_r^{\frac{1}{2}},\;  \frac{C_{b}}{c_{b}} \sqrt{d}^{d-1} m_{b}^{\frac{1}{2}}\right\},
    $$
     \begin{align*}
        \lambda_{r,\bm{m}}^R = \frac{3\lambda_r \sqrt{d}^{2\alpha}c_r^{-\frac{2\alpha}{d}}m_r^{-\frac{\alpha}{d}}}{C_{\bm{m}}},
        \quad
        \lambda_{b,\bm{m}}^R = \frac{3\lambda_{b} \sqrt{d}^{2\alpha} c_{b}^{-\frac{2\alpha}{d-1}} m_{b}^{-\frac{\alpha}{d-1}}}{C_{\bm{m}}}.
    \end{align*}
    
    By taking $C' = 3\lambda_{b}\sqrt{d}^{2\alpha} c_{b}^{-\frac{2\alpha}{d-1}}\big[g\big]_{\alpha;\Gamma}^2,$ we have
    
    \begin{equation*}
        \text{Loss}^{\text{PINN}}(h;\bm{\lambda})
        \le  C_{m}\cdot \text{Loss}_{\bm{m}}(h;\bm{\lambda},\bm{\hat{\lambda}}^R_{\bm{m}})
        + C'\cdot m_{b}^{-\frac{\alpha}{d-1}},
    \end{equation*}
    This completes the proof.
\end{proof}
Let $\bm{\lambda}$ be a vector independent of $\bm{m}$ 
and 
$\bm{\lambda}_{\bm{m}}^R = (\lambda_{r,\bm{m}}^R, \lambda_{b,\bm{m}}^R)$
be a vector satisfying
\begin{equation} \label{def:lambda-condition}
    \bm{\lambda}_{\bm{m}}^R \ge \bm{\hat{\lambda}}_{\bm{m}}^R, \qquad
    \|\bm{\lambda}_{\bm{m}}^R\|_\infty = \mathcal{O}(\|\bm{\hat{\lambda}}_{\bm{m}}^R\|_\infty),
\end{equation}
where $\bm{\hat{\lambda}}_{\bm{m}}^R$ is defined in \eqref{def:lambdas}.
For a vector $v$, $\|v\|_{\infty}$ is the maximum norm, i.e., $\|v\|_{\infty} = \max_i |v_i|$.
We note that 
since $\bm{\hat{\lambda}}_{\bm{m}}^R \to 0$
as $\bm{m} \to \infty$,
the above condition implies 
$\bm{\lambda}_{\bm{m}}^R \to 0$
as $\bm{m} \to \infty$.	

Inspired by \cite{shin2020convergence}   letting $R_r(h) = \big[\mathcal{L}[h]\big]_{\alpha;U}^2$
and $R_{b}(h) = \big[\mathcal{B}[h]\big]_{\alpha;\Gamma}^2$,
we define the H\"{o}lder regularized empirical loss:
\begin{equation} \label{def:Holder-Reg-Loss}
\begin{split}
&\text{Loss}_{\bm{m}}(h;\bm{{\lambda}},\bm{{\lambda}}^R_{\bm{m}}) 
\\
&= \begin{cases}
\text{Loss}_{\bm{m}}^{\text{PINN}}(h;\bm{{\lambda}})
+\lambda_{r,\bm{m}}^R
\big[\mathcal{F}[h]\big]_{\alpha;U}^2
+\lambda_{b,\bm{m}}^R
\big[h\big]_{\alpha;\Gamma}^2, & \text{if } d \ge 2 \\[12pt]
\text{Loss}_{\bm{m}}^{\text{PINN}}(h;\bm{{\lambda}})
+ \lambda_{r,\bm{m}}^R
\big[\mathcal{F}[h]\big]_{\alpha;U}^2,  &\text{if } d = 1
\end{cases}
\end{split}
\end{equation}
where $\bm{\lambda}_{\bm{m}}^R$ are vectors satisfying \eqref{def:lambda-condition}.
We note that the H\"{o}lder regularized loss \eqref{def:Holder-Reg-Loss} is greater than or equal to the empirical loss shown in Theorem~\ref{thm:gen} (assuming $R_r(h) = \big[\mathcal{L}[h]\big]_{\alpha;U}^2$
and $R_{b}(h) = \big[h\big]_{\alpha;\Gamma}^2$).
Since $\big[\mathcal{F}[h]\big]_{\alpha;U}^2$
and $\big[h\big]_{\alpha;\Gamma}^2$ do not depend on the training data
and $\hat{\lambda}_{r,\bm{m}}^R, \hat{\lambda}_{b,\bm{m}}^R \to 0$ as $m_r, m_{b} \to \infty$,
this suggests that the more data we have, the less regularization is needed.

According to \cite{shin2020convergence} we make the following assumptions  on the classes of neural networks
for the minimization problems \eqref{def:problem}.
\begin{assumption} \label{assumption:convergence}
	Let $k$ be the highest order of the derivative shown in the PDE \eqref{PDE}.
	For some $0 < \alpha \le 1$, 
	let  $g \in C^{0,\alpha}(\Gamma)$.
	\begin{itemize}

	    \item[$\bullet$] 
        For each $\bm{m}$,
	    let $\mathcal{H}_{\bm{m}}$ be a class of neural networks
	    in $C^{k,\alpha}(U)\cap C^{0,\alpha}(\overline{U})$
	    such that
    	for any $h \in \mathcal{H}_{\bm{m}}$, $\mathcal{F}[h] \in C^{0,\alpha}(U)$ 
	    and $h \in C^{0,\alpha}(\Gamma)$.
		\item[$\bullet$]  For each $\bm{m}$, $\mathcal{H}_{\bm{m}}$ contains a network $u_{\bm{m}}^*$ 
		satisfying 
		$\text{Loss}_{\bm{m}}^{\text{PINN}}(u_{\bm{m}}^*;\bm{\lambda}) = 0$.
		 \item[$\bullet$] 
		Moreover we assume, 
		$$
		\sup_{\bm{m}} \big[\mathcal{F}[u_{\bm{m}}^*]\big]_{\alpha;U}<+\infty, \quad \sup_{\bm{m}} [u_{\bm{m}}^*]_{\alpha;\Gamma} < +\infty.
		$$
	\end{itemize}
\end{assumption}

All the assumptions are essential.  the Assumption~\ref{assumption:convergence} needed to
guarantee the uniform equicontinuity of subsequences.
All assumptions hold automatically if $\mathcal{H}_{\bm{m}}$ contains the solution to the PDE for all $\bm{m}$.
For example, \cite{Darbon_19_NNsHJ,Darbon_20_NNsHJ} showed that the solution 
to some Hamilton-Jacobi PDEs can be exactly represented by neural networks.
{
\begin{remark}
    The choice of classes of neural networks $\mathcal{H}_{\bm{m}}$
    may depend on many factors
    including 
    the underlying PDEs, 
    the training data,
    and the network architecture.
    Assumption~\ref{assumption:convergence}
    provides a set of conditions 
    for neural networks 
    for the purpose of our analysis.
\end{remark}}

For the rest of this paper, we use the following notation.
When the number of the initial/boundary training data points $m_{b}$ is completely determined by the number of residual points $m_r$ (e.g. $m_r^{d-1} = \mathcal{O}(m_{b}^{d})$),
the vector of the number of training data $\bm{m}$ depends only on $m_r$. 
In this case, we simply write $\mathcal{H}_{\bm{m}}$, $\text{Loss}_{\bm{m}}$ 
as $\mathcal{H}_{m_r}$, $\text{Loss}_{m_r}$, respectively.

Again following \cite{shin2020convergence}, we can further extend the result stating that minimizers of the loss \eqref{def:PINN-loss}  yield a small expected PINN loss in the nonlinear case. Therefore, in our case, the theorem can be formulated as follows:

\begin{theorem} \label{thm:conv-loss}
	Suppose Assumptions~\ref{assumption:data-dist} and ~\ref{assumption:convergence} hold.
	Let $m_r$ and $m_{b}$ be the number of iid samples from 
	$\mu_r$ and $\mu_{b}$, respectively,
	and $m_r = \mathcal{O}(m_{b}^{\frac{d}{d-1}})$.
	Let $\bm{\lambda}_{m_r}^R$ be a vector satisfying 
	\eqref{def:lambda-condition}.
	Let $h_{m_r} \in \mathcal{H}_{m_r}$ be a minimizer of the H\"{o}lder regularized loss $\text{Loss}_{m_r}(\cdot;\bm{\lambda}, \bm{\lambda}_{m_r}^R)$ \eqref{def:Holder-Reg-Loss}.
	Then the following holds.
	\begin{itemize}
	    \item[$\bullet$] With probability at least $(1 - \sqrt{m_r}(1-c_r/\sqrt{m_r})^{m_r})(1 - \sqrt{m_{b}}(1-c_{b}/\sqrt{m_{b}})^{m_{b}})$
	    over iid samples,
	\begin{equation*}
	\text{Loss}^{\text{PINN}}(h_{m_r};\bm{\lambda}) = \mathcal{O}(m_r^{-\frac{\alpha}{d}}).
	\end{equation*} 
	    \item[$\bullet$] With probability 1 over iid samples, 
	\begin{equation} \label{thm:l2-convergence}
	\lim_{m_r \to \infty} \mathcal{F}[h_{m_r}] = 0 \text{ in } C^0(U),
	\quad
	\lim_{m_r \to \infty} h_{m_r} = g \text{ in } C^0(\Gamma).
	\end{equation}
	\end{itemize}
\end{theorem}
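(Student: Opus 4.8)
The plan is to read off the rate in the first bullet directly from Theorem~\ref{thm:gen} combined with the minimality of $h_{m_r}$, and then to bootstrap the resulting $L^2$-control into uniform ($C^0$) convergence by a space-filling / Arzel\`a--Ascoli argument. First I would prove the rate. Applying Theorem~\ref{thm:gen} to $h_{m_r}$ on its stated high-probability event, and using $\bm\lambda_{m_r}^R \ge \hat{\bm\lambda}_{m_r}^R$ together with the monotonicity $\text{Loss}_{\bm m}(h;\bm\lambda,\hat{\bm\lambda}^R)\le \text{Loss}_{\bm m}(h;\bm\lambda,\bm\lambda^R)$, I obtain
\[
\text{Loss}^{\text{PINN}}(h_{m_r};\bm\lambda)\le C_{\bm m}\,\text{Loss}_{m_r}(h_{m_r};\bm\lambda,\bm\lambda_{m_r}^R)+C'm_b^{-\alpha/(d-1)}.
\]
By Assumption~\ref{assumption:convergence} there is $u_{m_r}^*\in\mathcal H_{m_r}$ with $\text{Loss}^{\text{PINN}}_{m_r}(u_{m_r}^*;\bm\lambda)=0$, so minimality of $h_{m_r}$ gives $\text{Loss}_{m_r}(h_{m_r};\bm\lambda,\bm\lambda_{m_r}^R)\le\lambda^R_{r,m_r}[\mathcal F[u^*_{m_r}]]^2_{\alpha;U}+\lambda^R_{b,m_r}[u^*_{m_r}]^2_{\alpha;\Gamma}$. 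The uniform bounds $\sup_{\bm m}[\mathcal F[u^*_{\bm m}]]_{\alpha;U}<\infty$, $\sup_{\bm m}[u^*_{\bm m}]_{\alpha;\Gamma}<\infty$ and $\|\bm\lambda_{m_r}^R\|_\infty=\mathcal O(\|\hat{\bm\lambda}^R_{m_r}\|_\infty)$ bound this by $\mathcal O(\|\hat{\bm\lambda}^R_{m_r}\|_\infty)$. The decisive observation is that multiplying by $C_{\bm m}$ cancels the $C_{\bm m}$ appearing in the denominators of \eqref{def:lambdas}, so $C_{\bm m}\hat\lambda^R_{r,m_r}=\Theta(m_r^{-\alpha/d})$ and $C_{\bm m}\hat\lambda^R_{b,m_r}=\Theta(m_b^{-\alpha/(d-1)})$; since $m_r=\mathcal O(m_b^{d/(d-1)})$ forces $m_b^{-\alpha/(d-1)}=\mathcal O(m_r^{-\alpha/d})$, all terms including the trailing $C'm_b^{-\alpha/(d-1)}$ are $\mathcal O(m_r^{-\alpha/d})$, which is the first bullet.

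Next, for the almost-sure statement I would pass from ``high probability for each $m_r$'' to ``probability one for all large $m_r$'' via Borel--Cantelli. The complementary probability is at most $\sqrt{m_r}(1-c_r/\sqrt{m_r})^{m_r}+\sqrt{m_b}(1-c_b/\sqrt{m_b})^{m_b}$, and the elementary inequality $1-x\le e^{-x}$ gives $\sqrt m\,(1-c/\sqrt m)^m\le \sqrt m\,e^{-c\sqrt m}$, which is summable in $m$. Since $m_b=\Omega(m_r^{(d-1)/d})\to\infty$, the stretched-exponential decay beats the polynomial factor and the $m_b$-term is summable in $m_r$ as well, so the events on which the rate fails occur only finitely often almost surely. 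Hence, with probability one, the two nonnegative summands $\|\mathcal F[h_{m_r}]\|^2_{L^2(U;\mu_r)}$ and $\|h_{m_r}-g\|^2_{L^2(\Gamma;\mu_b)}$ of $\text{Loss}^{\text{PINN}}(h_{m_r};\bm\lambda)$ from \eqref{def:PINN-loss-2} both tend to $0$.

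Finally I would upgrade this $L^2$-convergence to uniform convergence, which is the crux. Minimality applied to the regularized loss controls the H\"older seminorms: from $\lambda^R_{r,m_r}[\mathcal F[h_{m_r}]]^2_{\alpha;U}\le\text{Loss}_{m_r}(u^*_{m_r};\bm\lambda,\bm\lambda_{m_r}^R)=\mathcal O(\|\hat{\bm\lambda}^R_{m_r}\|_\infty)$, and using that $\lambda^R_{r,m_r}$ is of the same order as $\hat\lambda^R_{r,m_r}$, one gets $\sup_{m_r}[\mathcal F[h_{m_r}]]_{\alpha;U}<\infty$, and analogously $\sup_{m_r}[h_{m_r}]_{\alpha;\Gamma}<\infty$ from the prescribed scaling of $\bm\lambda^R_{m_r}$ relative to $m_r,m_b$. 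I would then invoke the elementary lemma: if $f_n\to0$ in $L^2(X;\nu)$, $\sup_n[f_n]_{\alpha;X}\le M$, and $\nu(B_\rho(x)\cap X)\ge c\,\rho^s$ for all small $\rho$, then $\|f_n\|_{C^0(X)}\to0$. Its proof is by contradiction: if $|f_n(x_n)|\ge\delta$ along a subsequence, H\"older continuity keeps $|f_n|\ge\delta/2$ on $B_\rho(x_n)\cap X$ with $\rho=(\delta/(2M))^{1/\alpha}$, forcing $\|f_n\|_{L^2}^2\ge c\,(\delta/2)^2\rho^s$, a fixed positive number, contradicting $L^2$-convergence. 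The volume hypothesis holds here because $\inf_U\rho_r>0$, $\inf_\Gamma\rho_b>0$ and $U$ is at least of class $C^{0,1}$, so balls meet $U$ (resp.\ $\Gamma$) in sets of measure at least a constant times $\rho^d$ (resp.\ $\rho^{d-1}$). Applying the lemma with $f_n=\mathcal F[h_{m_r}]$ on $U$ and $f_n=h_{m_r}-g$ on $\Gamma$, whose seminorm is bounded by $[h_{m_r}]_{\alpha;\Gamma}+[g]_{\alpha;\Gamma}$, yields exactly \eqref{thm:l2-convergence}. The hard part is this last step: securing the uniform H\"older bounds, which is precisely where the balance between $\bm\lambda^R_{m_r}$, $m_r$ and $m_b$ must be exploited, and verifying the lower volume estimate against the boundary Hausdorff measure on a merely Lipschitz domain.
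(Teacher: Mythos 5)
Your argument is correct and follows the paper's strategy at every essential point: the rate in the first bullet is obtained exactly as in the paper, by comparing $h_{m_r}$ against the zero-loss network $u^*_{m_r}$ supplied by Assumption~\ref{assumption:convergence}, invoking Theorem~\ref{thm:gen} together with the monotonicity in $\bm{\lambda}^R$, and observing that multiplication by $C_{\bm{m}}$ cancels the denominators in \eqref{def:lambdas}; likewise the uniform H\"{o}lder bounds on $\mathcal{F}[h_{m_r}]$ and $h_{m_r}$ are read off from the regularization term via the same chain of inequalities as \eqref{app:eqn-loss-convg}. You diverge in two places. First, you spell out the Borel--Cantelli summability argument behind the almost-sure statement, which the paper asserts in one sentence without justification; this is a genuine (if routine) gap that you fill. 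Second, for upgrading $L^2$ to $C^0$ convergence the paper extracts a uniformly convergent subsequence by Arzel\`{a}--Ascoli, identifies its limit in $L^2$ via dominated convergence, and appeals to the arbitrariness of the subsequence, whereas you prove a direct quantitative lemma: $L^2$-smallness plus a uniform H\"{o}lder bound plus a lower measure-density bound forces sup-norm smallness. The ingredients are the same --- the paper also implicitly needs the positive density of $\mu_r$ and $\mu_b$ to pass from ``zero $\mu$-a.e.'' to ``identically zero'' for a continuous limit --- but your route avoids the compactness extraction, is quantitative, and makes explicit the measure-density verification (via the partition cubes of Assumption~\ref{assumption:data-dist} or the Lipschitz regularity of $U$) that the paper's terse closing sentence leaves unaddressed.
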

\begin{proof}
		Here for the sake of completeness  we give a proof by following \cite[Theorem $3.2$]{shin2020convergence}. 
 Suppose $m_r = \mathcal{O}(m_{b}^{\frac{d}{d-1}})$. 
	It then can be checked that 
	$\hat{\lambda}_{r,\bm{m}}^R = \hat{\lambda}_{b,\bm{m}}^R =\mathcal{O}(m_r^{-\frac{1}{2}-\frac{\alpha}{d}})$,
	where $\hat{\lambda}_{r,\bm{m}}$ and $\hat{\lambda}_{b,\bm{m}}$ are defined in \eqref{def:lambdas}.
	Let $\bm{\lambda}$ be a vector independent of $\bm{m}$
	and $\bm{\lambda}_{\bm{m}}^R$ be a vector satisfying \eqref{def:lambda-condition}.
	
	Let $h_{m} \in \mathcal{H}_{m}$ be a function that minimizes $\text{Loss}_{\bm{m}}(\cdot;\bm{\lambda}, \bm{\lambda}_{\bm{m}}^R)$.
	Recalling that $u_{\bm{m}}^* \in \mathcal{H}_{\bm{m}}$,  we write
	\begin{equation} \label{app:eqn-loss-convg}
	\begin{split}
	&
	\min\{\lambda_{r,\bm{m}}^R,\lambda_{b,\bm{m}}^R\} \left(R_r(h_{m}) + R_{b}(h_{m})\right) 
	\le \text{Loss}_{m}(h_{m};\bm{\lambda}, \bm{\lambda}_{\bm{m}}^R)
	\\
	&\le 
	\text{Loss}_{m}(u_{\bm{m}}^*;\bm{\lambda}, \bm{\lambda}_{\bm{m}}^R)
	\le
	\text{Loss}_{m}(u_{\bm{m}}^*;\bm{\lambda}, \bm{0}) + 
	\|\bm{\lambda}_{\bm{m}}^R\|_{\infty}\left(R_r(u_{\bm{m}}^*) + R_{b}(u_{\bm{m}}^*)\right). 
	\end{split}
	\end{equation}
	Since $\bm{\lambda}_{\bm{m}}^R \ge \hat{\bm{\lambda}}_{\bm{m}}^R$
	and $\|\bm{\lambda}_{\bm{m}}^R\|_\infty = \mathcal{O}(\|\hat{\bm{\lambda}}_{\bm{m}}^R\|_\infty)$, we have 
	$\frac{\max_j (\bm{\lambda}_{\bm{m}}^R)_j}{\min_j (\bm{\lambda}_{\bm{m}}^R)_j} =\mathcal{O}(1)$.
	Let $R^* = \sup_{\bm{m}} (R_r(u_{\bm{m}}^*) + R_b(u_{\bm{m}}^*))$.
	By the third assumption in 
 Assumption \ref{assumption:convergence}, we have $R^* < \infty$.

 Let us write $m_r$ as $m$ for the sake of simplicity.
	We then have $R_r(h_{m}), R_{b}(h_{m})\le \mathcal{O}(R^*)$ for all $m$.
	Since $R_r(h_{m})=
	\big[\mathcal{F}[h_{m}]\big]_{\alpha;U}^2$
	and $R_{b}(h_{m})=\big[h_{m}\big]_{\alpha;\Gamma}^2$,
	the H\"{o}lder coefficients of $\mathcal{F}[h_{m}]$ and $h_{m}$ are uniformly bounded above.
	With the first assumption in Assumption \ref{assumption:convergence},
    $\{\mathcal{F}[h_{m}]\}$ and $\{h_{m}\}$
    are uniformly bounded and uniformly equicontinuous sequences of functions in
    $C^{0,\alpha}(U)$ and $C^{0,\alpha}(\Gamma)$, respectively.
	By invoking the Arzela-Ascoli Theorem, 
	there exists a subsequence $h_{m_j}$
	and functions $G \in C^{0,\alpha}(U)$ and $B \in C^{0,\alpha}(\Gamma)$
	such that $\mathcal{F}[h_{m_j}] \to G$ 
	and $h_{m_j}\to B$ in $C^{0}(U)$ and $C^{0}(\Gamma)$, respectively, as $j \to \infty$.

    Since $\text{Loss}_m(h_m;\bm{\lambda},\bm{\lambda}_{\bm{m}}^R) = \mathcal{O}(m_r^{-\frac{1}{2}-\frac{\alpha}{d}})$,
	by combining it with Theorem~\ref{thm:gen}, we have that 
	with probability at least $(1 - \sqrt{m_r}(1-c_r/\sqrt{m_r})^{m_r})(1 - \sqrt{m_{b}}(1-c_{b}/\sqrt{m_{b}})^{m_{b}})$,
	\begin{equation*}
	\text{Loss}(h_{m};\bm{\lambda},\bm{0}) = \mathcal{O}(m^{-\frac{\alpha}{d}}). 
	\end{equation*}

	Hence, the probability of
	$\lim_{m \to \infty} 
	\text{Loss}(h_{m};\bm{\lambda}, \bm{0}) = 0$
	is one. Thus, with probability 1, 
	\begin{align*}
	0 &= \lim_{j \to \infty} \text{Loss}(h_{m_j};\bm{\lambda})
	\\
	&= \lim_{j \to \infty} \lambda_r \int_{U} \|\mathcal{F}[h_{m_j}](\x_r)\|^2d\mu_r(\x_r) +  \lambda_{b}\int_{\Gamma} \|h_{m_j}(\x_{b}) - g(\x_{b}) \|^2 d\mu_{b}(\x_{b})
	\\
	&=\lambda_{f}\int_{U} \|G(\x_r)\|^2d\mu_r(\x_r) + 
	\lambda_{b} \int_{\Gamma} \|B(\x_{b}) - g(\x_{b}) \|^2 d\mu_{b}(\x_{b}),
	\end{align*}
	which shows that 
	$G = f$ in $L^2(U;\mu_r)$ and $B = g$ in $L^2(\Gamma;\mu_{b})$.
	Note that since $\mathcal{F}[h_{m_j}]$ and $h_{m_j}$ are uniformly bounded above and uniformly converge to $G$ and $B$, respectively,
	the third equality holds by Lebesgue's Dominated Convergence Theorem.
	Since the subsequence was arbitrary,
	we conclude that $\mathcal{F}[h_{m}] \to 0$ in $L^2(U;\mu_r)$ and $h_m \to g$
	in $L^2(\Gamma;\mu_{b})$ as $m \to \infty$.
	Furthermore, since $\{\mathcal{F}[h_{m}]\}$ and $\{h_m\}$ are equicontinuous, then
	we have  uniform convergence, i.e. convergence in  $C^{0}$. The proof of the theorem is completed.  
\end{proof}

Theorem~\ref{thm:conv-loss} shows that the expected PINN loss \eqref{def:PINN-loss-2} at minimizers of the  loss \eqref{def:PINN-loss} converges to zero.
And more importantly, it shows the uniform convergences of $\mathcal{F}[h_{m_r}] \to 0$ and $h_{m_r}\to g$ as ${m_r} \to \infty$.
These results are crucial, however,
it is insufficient to claim the convergence of $h_{m_r}$ to the solution to the PDE \eqref{PDE}.
To this aim, the authors in \cite{shin2020convergence} utilized the well-known Schauder estimates to establish the convergence of $h_{m_r}$ for linear PDEs. In our context, however, we are dealing with fully nonlinear PDE's and therefore such approach cannot be applied.

The main result of the paper reads as follows.
\begin{theorem}
Suppose Assumptions~\ref{assumption:data-dist} and ~\ref{assumption:convergence} hold. Let $m_r$ and $m_b$ be the number of iid samples from $\mu_r$ and $\mu_b,$ respectively, and $m_r =\mathcal{O}(m_b^{\frac{d}{d-1}}).$ We also set $U_{m_r}=\mathcal{T}^{m_r}_r\cup\mathcal{T}^{m_b}_b,$ where $\mathcal{T}_r^{m_r}=\{\x_r^i\}_{i=1}^{m_r}$ be iid samples from $\mu_r$ on $U$ and $\mathcal{T}_b^{m_b} = \{\x^i_b\}_{i=1}^{m_b}$ be iid samples from $\mu_b$ on $\Gamma$.
Let $\bm{\lambda}_{m_r}^R$ be a vector satisfying 
	\eqref{def:lambda-condition} and
 $h_{m_r} \in \mathcal{H}_{m_r}$ be a minimizer of the H\"{o}lder regularized loss $\text{Loss}_{m_r}(\cdot;\bm{\lambda}, \bm{\lambda}_{m_r}^R)$ \eqref{def:Holder-Reg-Loss}.
If  for $0<\alpha\leq 1$ given in Assumption \ref{assumption:convergence}, we have
\begin{equation}\label{uniform_bdd_minimizers}
    \sup_{{m_r}} \big[h_{m_r}\big]_{\alpha;U_{m_r}}<+\infty,
\end{equation}
then with probability 1 over iid samples
 $$
 \underset{m_r\to\infty}{lim}h_{m_r}=u^*\;\text{ in }\; C^0(\overline{U}),
 $$
 where $u^*$ is a unique viscosity solution to the system \eqref{PDE}.
\end{theorem}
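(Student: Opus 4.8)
The plan is to combine the $C^0$-convergence already delivered by Theorem~\ref{thm:conv-loss} with a compactness argument and the stability of viscosity solutions under uniform convergence, and then to close the loop with the comparison principle. With probability one we already have $\mathcal{F}[h_{m_r}] \to 0$ in $C^0(U)$ and $h_{m_r} \to g$ in $C^0(\Gamma)$; the three remaining tasks are to upgrade this to a uniform limit of the $h_{m_r}$ themselves on $\overline{U}$, to identify that limit as a viscosity solution of \eqref{PDE}, and finally to invoke uniqueness.

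First I would set up Arzel\`a--Ascoli on $\overline{U}$. The hypothesis \eqref{uniform_bdd_minimizers} supplies a uniform H\"older constant for $h_{m_r}$ on the sampling sets $U_{m_r}$, and by Lemma~\ref{cor-sampling} these sets become, with probability one, $\epsilon_{m_r}$-dense in $\overline{U}$ with $\epsilon_{m_r}\to 0$. Together with the continuity of the smooth networks $h_{m_r}$ and the uniform control on $\Gamma$ coming from $h_{m_r}\to g$, this produces a uniformly bounded, uniformly equicontinuous family on $\overline{U}$. Hence some subsequence $h_{m_{r_j}}$ converges uniformly on $\overline{U}$ to a continuous function $v$, and passing to the limit in the boundary convergence gives $v = g$ on $\Gamma$.

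The heart of the argument is to show that $v$ is a viscosity solution of \eqref{PDE} in $U$. Since every $h_{m_r}$ is smooth, it is a classical, hence viscosity, solution of the perturbed equation $F(x,u,Du,D^2u) = g_{m_r}(x)$ with $g_{m_r} := \mathcal{F}[h_{m_r}]$, where $g_{m_r}\to 0$ uniformly. I would then run the standard stability argument: given $\varphi \in C^2(U)$ and a (after the usual reduction, strict) local maximum $x_0$ of $v-\varphi$, uniform convergence $h_{m_{r_j}}\to v$ yields local maxima $x_j \to x_0$ of $h_{m_{r_j}} - \varphi$. At such an interior maximum one has $Dh_{m_{r_j}}(x_j)=D\varphi(x_j)$ and $D^2h_{m_{r_j}}(x_j)\le D^2\varphi(x_j)$, so degenerate ellipticity converts the pointwise identity into $F(x_j, h_{m_{r_j}}(x_j), D\varphi(x_j), D^2\varphi(x_j)) \le g_{m_{r_j}}(x_j)$; letting $j\to\infty$ and using continuity of $F$, $x_j\to x_0$, $h_{m_{r_j}}(x_j)\to v(x_0)$ and $g_{m_{r_j}}\to 0$ gives $F(x_0,v(x_0),D\varphi(x_0),D^2\varphi(x_0))\le 0$. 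The symmetric computation at local minima delivers the supersolution inequality, so $v$ is a viscosity solution.

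Finally, since \eqref{PDE} is degenerate elliptic and satisfies the comparison principle, its viscosity solution with boundary data $g$ is unique, whence $v = u^*$. As the same limit $u^*$ is obtained for every uniformly convergent subsequence, the standard subsequence-of-every-subsequence argument promotes subsequential convergence to convergence of the whole family, giving $h_{m_r}\to u^*$ in $C^0(\overline{U})$ with probability one. I expect the main obstacle to be the stability step, namely transferring the vanishing-residual information $\mathcal{F}[h_{m_r}]\to 0$ into the viscosity-solution property of the limit, since this is precisely where the fully nonlinear setting forbids the Schauder-estimate route available for linear PDEs and forces reliance on the viscosity framework and degenerate ellipticity. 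A secondary delicate point is justifying genuine equicontinuity on all of $\overline{U}$ from a H\"older bound imposed only on the discrete, albeit asymptotically dense, sampling sets $U_{m_r}$.
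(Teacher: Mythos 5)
Your proposal follows essentially the same route as the paper: uniform H\"older control on the sample sets plus their asymptotic density, an Arzel\`a--Ascoli extraction, the viscosity stability argument via degenerate ellipticity at interior maxima of $h_{m_r}-\varphi$ combined with the uniform vanishing of $\mathcal{F}[h_{m_r}]$ from Theorem~\ref{thm:conv-loss}, and finally uniqueness from the comparison principle together with the subsequence-of-subsequences argument to upgrade to convergence of the full family.

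The one place where your write-up and the paper genuinely diverge is the compactness step, and it is exactly the point you flag as delicate. As stated, your claim that the H\"older bound on $U_{m_r}$ ``together with the continuity of the smooth networks'' yields a uniformly equicontinuous family on all of $\overline{U}$ does not follow: hypothesis \eqref{uniform_bdd_minimizers} controls $h_{m_r}$ only at the sample points, and pointwise continuity of each individual network gives no uniform modulus between samples, where the networks could in principle oscillate. The paper circumvents this by introducing the closest-point projection $p_{m_r}$ onto $\mathcal{T}^{m_r}_r\cup\mathcal{T}^{m_b}_b$ and working with $v_{m_r}=h_{m_r}\circ p_{m_r}$, which satisfies the approximate modulus $|v_{m_r}(\x)-v_{m_r}(\y)|\le C\left(\|\x-\y\|^\alpha+2\max(\bm{\varepsilon}_r^\alpha,\bm{\varepsilon}_b^\alpha)\right)$ with the additive error tending to zero; a variant of Arzel\`a--Ascoli for such asymptotically equicontinuous sequences (from the appendix of \cite{calder2015pde}) then produces the uniform limit $u^*$, and the identification of the limit proceeds through the values of $h_{m_r}$ on the sample sets via \eqref{uniform_conv}. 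If you want your argument to close, you should either adopt this projection device or impose/derive a H\"older bound for $h_{m_r}$ on all of $\overline{U}$ rather than on $U_{m_r}$ alone. The remainder of your argument (stability, boundary identification $v=g$ on $\Gamma$, comparison principle, full-sequence convergence) matches the paper's reasoning.
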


\begin{proof}
Observe that  the condition \eqref{uniform_bdd_minimizers} implies  the existence of a universal constant  $C>0$ independent of $m_r$  such that for every $\x,\y\in {U}_{m_r}$ 
$$
|h_{m_r}(\x)- h_{m_r}(\y)|\leq C||\x-\y||^\alpha.
$$
Let $p_{m_r}: \overline{U}\rightarrow\mathcal{T}^{m_r}_r\cup\mathcal{T}^{m_b}_b$ be the closest point projection such that

\begin{equation}
||\x-p_{m_r}(\x)||= 
\begin{cases}
   \underset{\y\in \mathcal{T}^{m_r}_r}{\mbox{min}}||\x-\y||\;\; \mbox{if}\;\; \x\in U\\
   \underset{\y\in \mathcal{T}^{m_b}_b}{\mbox{min}}||\x-\y||\;\; \mbox{if} \;\; \x\in \Gamma.
\end{cases}
\end{equation}

Define by $v_{m_r}(\x)=h_{m_r}(p_{m_r}(\x)).$ It is apparent that $v_{m_r}(\x)=h_{m_r}(\x)$ for every $\x\in \mathcal{T}^{m_r}_r\cup\mathcal{T}^{m_b}_b.$ 
Again recalling Lemma~\ref{cor-sampling}, we get with probability at least   
	\begin{equation} 
	(1 - \sqrt{m_r}(1-1/\sqrt{m_r})^{m_r})
	(1 - \sqrt{m_{b}}(1-1/\sqrt{m_{b}})^{m_{b}}),
	\end{equation}
	$\forall \x_r \in U$ and $\forall \x_b \in \Gamma$,
	there exists $\x_r' \in \mathcal{T}_r^{m_r}$
	and $\x_b' \in \mathcal{T}_b^{m_b}$ such that 
	$\|\x_r - \x_r'\| \le \sqrt{d}c_r^{-\frac{1}{d}}m_r^{-\frac{1}{2d}}$ and $\|\x_{b} - \x_{b}'\| \le \sqrt{d}c^{-\frac{1}{d-1}}_{b}m_{b}^{-\frac{1}{2(d-1)}}$.
	By letting $\bm{\varepsilon}_r = \sqrt{d}c_r^{-\frac{1}{d}}m_r^{-\frac{1}{2d}}$
	and $\bm{\varepsilon}_{b} = \sqrt{d}c^{-\frac{1}{d-1}}_{b}m_{b}^{-\frac{1}{2(d-1)}}$, 
	it follows $||\x_r-p_{m_r}(\x_r)||\leq \bm{\varepsilon}_r$ whenever $\x_r\in U,$ and $||\x_b-p_{m_r}(\x_b)||\leq \bm{\varepsilon}_b$ whenever $\x_b\in \Gamma.$

Thus, $\forall \x,\y \in \overline{U}$
\begin{align*}
|v_{m_r}(\x)- v_{m_r}(\y)|&\leq C||p_{m_r}(\x)- p_{m_r}(\y)||^\alpha\\
&=C||p_{m_r}(\x)-\x +\y-p_{m_r}(\y) + (\x-\y)||^\alpha\\
&\leq C\left(||\x-\y||^\alpha + 2\max(\bm{\varepsilon}^\alpha_r,\bm{\varepsilon}^\alpha_b)\right).
\end{align*}
We  use a variant of the Arzelà-Ascoli Theorem (see the appendix in \cite{calder2015pde}) to show that there exists a subsequence, which we again denote by $v_{m_r},$ and a 
H\"{o}lder  continuous function $u^* \in C^{0,\alpha}(U)$ such that $v_{m_r}\to u^*$ uniformly on $\overline{U}$ as $m_r\to\infty.$
Since  $v_{m_r}(\x)=h_{m_r}(\x)$ for every $\x\in \mathcal{T}^{m_r}_r\cup\mathcal{T}^{m_b}_b,$ then we have 
\begin{equation}\label{uniform_conv}
\lim_{m_r\to\infty}\max_{\mathcal{T}^{m_r}_r\cup\mathcal{T}^{m_b}_b}|h_{m_r}(\x)-u^*(\x)|=0.
\end{equation}
We claim that $u^*$ is a viscosity solution to \eqref{PDE}.
Once this is proved, we can apply the same argument to any subsequence of $h_{m_r}$ to show that the entire sequence converges uniformly to $u^*.$
Let $\x_0\in U$ and $\varphi\in C^\infty(\mathbb{R}^d)$ such that $u^*-\varphi$ has a strict local maximum at the point $\x_0$ and $\nabla \varphi(\x_0)\neq 0.$

Following \eqref{uniform_conv} there exists a sequence of points $\x_{m_r}\in \mathcal{T}^{m_r}_r\cup\mathcal{T}^{m_b}_b,$ such that $h_{m_r}-\varphi$ attains its local maximum at $\x_{m_r}$ and $\x_{m_r}\to \x_0$ as $r\to \infty.$ Moreover, $h_{m_r}(\x_{m_r})\to u^*(\x_0).$  Since $h_{m_r}$ and $\varphi$ are smooth functions (due to sufficiently smooth activation function in PINN), then  at the point $\x_{m_r}$ holds:
$$
D(h_{m_r}-\varphi)(\x_{m_r})=0 \;\;\mbox{and} \;\; D^2 (h_{m_r}-\varphi)(\x_{m_r})\leq 0.
$$
Thus applying the degenerate ellipticity property we obtain
$$
 F\left(\x_{m_r}, h_{m_r}(\x_{m_r}), D\varphi(\x_{m_r}), D^2\varphi(\x_{m_r}) \right)\leq F\left(\x_{m_r}, h_{m_r}(\x_{m_r}), D h_{m_r}(\x_{m_r}), D^2 h_{m_r}(\x_{m_r}) \right),
$$
or in short 
$$
 F\left(\x_{m_r}, h_{m_r}(\x_{m_r}), D\varphi(\x_{m_r}), D^2\varphi(\x_{m_r}) \right)\leq \mathcal{F}[h_{m_r}](\x_{m_r}).
$$
Due to Theorem \ref{thm:conv-loss} we know that  $\mathcal{F}[h_{m_r}]$ converges to $0$ uniformly as $m_r\to\infty.$ Hence 
$$
F\left(\x_{m_r}, h_{m_r}(\x_{m_r}), D\varphi(\x_{m_r}), D^2\varphi(\x_{m_r}) \right)\leq \mathcal{F}[h_{m_r}](\x_{m_r})\to 0\;\;\mbox{as}\;\; m_r\to\infty,
$$
which leads to
$$
F\left(\x_0, u^*(\x_0), D\varphi(\x_0), D^2\varphi(\x_0) \right)\leq 0.
$$
Therefore $u^*$ is a viscosity subsolution of \eqref{PDE}. Similarly, we can check  the supersolution property.
This completes the proof.
\end{proof}

\section*{Acknowledgment}
A. Arakelyan was supported by the Science Committee of RA (Research project № 24IRF-1A001).

%
%
%
\bibliographystyle{splncs04}
\bibliography{biblio}
\end{document}